\DeclareMathAlphabet\mathbfcal{OMS}{cmsy}{b}{n} 
\renewcommand{\vec}[1]{\mathbf{#1}}
\renewcommand{\d}{\operatorname{d}\!}
\newcommand{\mat}[1]{\mathbf{#1}}
\newcommand{\ten}[1]{\mathbfcal{#1}}
\renewcommand{\top}{\mathsmaller T}
\newcommand{\pinv}{\dagger}
\newcommand{\true}[1]{\overline{#1}}
\newcommand{\eg}{\emph{e.g., }} 
\newcommand{\ie}{\emph{i.e., }} 
\newcommand{\etc}{\emph{etc.}} 
\newcommand{\isdef}{:=} 
\theoremstyle{definition}
\newtheorem{lemma}{Lemma}
\newtheorem{theorem}{Theorem}
\theoremstyle{remark}
\newtheorem{remark}{Remark}
\theoremstyle{definition}
\newtheorem{iexample}{Example}
\newenvironment{example}%
  {\color{green!50!black}\begin{iexample}}%
  {\end{iexample}}
\title{Decoupling Multivariate Polynomials Using First-Order Information}
\author{Philippe Dreesen \quad Mariya Ishteva \quad Johan Schoukens\\ 
Vrije Universiteit Brussel, Dept.~ELEC\\
Pleinlaan 2, B-1050 BRUSSELS\\
{\tt philippe.dreesen@vub.ac.be}}
\date{}
\begin{document}
\maketitle

\begin{abstract}
We present a method to decompose a set of multivariate real polynomials into linear combinations of univariate polynomials in linear forms of the input variables. 
The method proceeds by collecting the first-order information of the polynomials in a set of operating points, which is captured by the Jacobian matrix evaluated at the operating points. 
The polyadic canonical decomposition of the three-way tensor of Jacobian matrices directly returns the unknown linear relations, as well as the necessary information to reconstruct the univariate polynomials. 
The conditions under which this decoupling procedure works are discussed, and the method is illustrated on several numerical examples.
\end{abstract}

\section{Introduction}
\subsection{Problem Statement}
The problem addressed in this paper is how to decouple a given set of multivariate real polynomials. 
Such a so-called decoupled representation expresses how the polynomials can be written as a linear combination of parallel univariate polynomials of linear forms of the input variables. 
Formally the problem can be stated as follows: 
Consider a set of $n$ multivariate real polynomials $f_i(u_1,\ldots,u_m)$, with $i=1,\ldots,n$, of total degree\footnote{The total degree is defined as the maximal sum of the exponents of the variables in a term.}$d$ in $m$ variables. 
We wish to obtain a decomposition of the form
$$ f_i(u_1,\ldots,u_m) = \sum_{j=1}^r w_{ij} \cdot g_j\left(\sum_{k=1}^m v_{kj} u_k\right), \quad \mbox{for } i=1,\ldots,n,$$
where $g_j(x_j)$ are univariate polynomials of degree at most $d$. 
Generally, each $f_i(\vec{u})$ contains $\textstyle{m+d \choose m}$ coefficients, of which many correspond to `coupled' monomials consisting of several variables $u_i$, \eg $u_1 u_2$, $u_1^2 u_3$, $u_2 u_3^3$, \etc 

The decoupling task is visualized in Figure~\ref{fig:blockfigure}, and can be compactly represented using matrix-vector notation. 
\begin{figure*}[!htb]
\begin{center}
\tikz \node [scale=0.80] {\begin{tikzpicture}
\node (u1) at (0,3) {$u_1$};
\node at (0,2.15) {$\vdots$};
\node (um) at (0,1) {$u_m$};
\draw [thick,fill=black!20, rounded corners=5pt] (1,0.5) rectangle (5,3.5); \node (F) at (3,2) {$\vec{f}(\vec{u})$};
\draw [->, thick, label=] (5,3) -- (5.65,3) node[right] {$y_1$};
\node at (5.95,2.15) {$\vdots$};
\draw [->, thick] (5,1) -- (5.65,1) node[right] {$y_n$};
\draw [->, thick] (u1) -- (1,3);
\draw [->, thick] (um) -- (1,1);
\end{tikzpicture}
\quad
\raisebox{6\height}{\Large$\leftrightarrow$}
\quad
\begin{tikzpicture}
\node (u1) at (0,3) {$u_1$};
\node at (0,2.15) {$\vdots$};
\node (um) at (0,1) {$u_m$};
\draw [thick] (1,0.5) rectangle (2,3.5); \node (L) at (1.5,2) {$\mat{V}^\top$};
\draw [->, thick] (u1) -- (1,3);
\draw [->, thick] (um) -- (1,1);
\node [shape=rectangle,draw,thick,fill=black!20,rounded corners=5pt] (g1) at (4,3) {$g_1(x_1)$};
\draw [->, thick, label=] (2,3) -- (g1) node[above,midway] {$x_1$};
\node at (4,2.15) {$\vdots$};
\node [shape=rectangle,draw,thick,fill=black!20,rounded corners=5pt] (gr) at (4,1) {$g_r(x_r)$};
\draw [->, thick] (2,1) -- (gr) node[above,midway] {$x_r$};
\draw [thick] (6,0.5) rectangle (7,3.5); \node (R) at (6.5,2) {$\mat{W}$};
\draw [->, thick] (g1) -- (6,3) node[above,midway] {$z_1$};
\draw [->, thick] (gr) -- (6,1) node[above,midway] {$z_r$};
\node (y1) at (8,3) {$y_1$};
\node at (8,2.15) {$\vdots$};
\node (yn) at (8,1) {$y_n$};
\draw [->, thick] (7,3) -- (y1);
\draw [->, thick] (7,1) -- (yn);
\end{tikzpicture}};
\end{center}
\caption{Decoupling problem. Find from the polynomial mapping $\vec{y}=\vec{f}(\vec{u})$ the transformations $\mat{V}$ and $\mat{W}$ and the mappings $g_i(x_i)$ that constitute the parallel structure $\vec{f}(\vec{u}) = \mat{W} \vec{g}(\mat{V}^\top \vec{u})$.}
\label{fig:blockfigure}
\end{figure*}
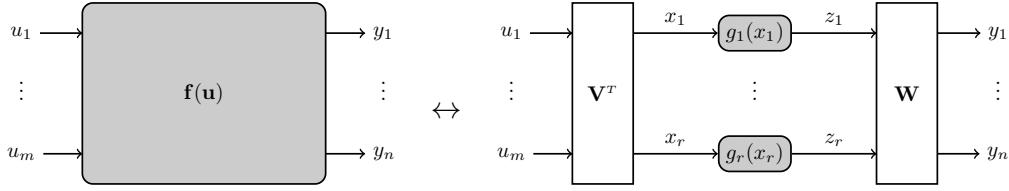
Consider therefore the multivariate polynomial vector function $\vec{f} : \mathbb{R}^m \to \mathbb{R}^n$ that is defined as 
$\vec{f}(\vec{u}) \isdef \left[ \begin{array}{ccc} f_1(\vec{u}) & \ldots & f_n(\vec{u}) \end{array} \right]^\top$,
in the variables $\vec{u} \isdef (u_1,\ldots,u_m)$. 
A \emph{decoupled representation} of $\vec{f}(\vec{u})$ is defined as 
\begin{equation}
    \vec{f}(\vec{u}) = \mat{W} \vec{g}(\mat{V}^\top \vec{u}),
    \label{eq:decoupled}
\end{equation}
where $\mat{V} \in \mathbb{R}^{m \times r}$ and $\mat{W} \in \mathbb{R}^{n \times r}$ are linear transformation matrices that relate the input variables $\vec{u}$ and the output variables $\vec{y}$ to the internal variables $\vec{x}, \vec{z} \in \mathbb{R}^r$ by the relations $\vec{x}=\mat{V}^\top \vec{u}$ and $\vec{y} = \mat{W} \vec{z}$, respectively. 
The function $\vec{g} \colon \mathbb{R}^r \to \mathbb{R}^r$ is defined as 
$$
\vec{g}(x_1,\ldots,x_r) = \left[ \begin{array}{ccc} g_1(x_1) & \ldots & g_r(x_r) \end{array} \right]^\top,
$$
with $g_k : \mathbb{R} \to \mathbb{R}$. 
The number $r$ corresponds to the number of internal univariate functions $g_i(x_i)$, and, as it will turn out, is closely related to the concept of \emph{tensor rank}, as will be discussed in Section~\ref{sec:method}.

The question may be studied in an exact or an approximate setting. 
For the exact case, the goal is to obtain an identical representation of a given set of polynomials, whereas in the non-exact case, an approximate representation (up to some degree of accuracy) is desired. 
The relevance of the question at hand is twofold: firstly, both in the exact and the non-exact setting, a decoupled representation may reveal new insights into a problem or may reduce the number of variables; secondly, in an approximate context, a decoupling may be useful to simplify a complex system. 

In this paper, we will study the decoupling task in the exact sense: we seek a decoupled representation that \emph{identically} matches a given set of multivariate polynomials and we assume that it exists.  
In order to clearly convey the ideas, in Example~\ref{ex:runningex1} we show a simple instance of two polynomials that have a decoupled representation.
Throughout the remainder of the paper, this simple example will be revisited to illustrate the results of the presented decoupling procedure. 
\begin{example}\label{ex:runningex1}
Consider the polynomials $f_1(u_1,u_2)$ and $f_2(u_1,u_2)$ of total degree $d=3$, given as
\begin{equation}
\begin{array}{rcl} 
    y_1 & = &  f_1(u_1,u_2)\\ \\
        & = & 54 u_1^3 - 54 u_1^2 u_2 + 8 u_1^2 + 18 u_1 u_2^2 + 16 u_1 u_2 - 2 u_2^3 + 8 u_2^2 + 8 u_2 + 1,\\ \\
   y_2 &=& f_2(u_1,u_2) \\ \\
       &=& - 27 u_1^3 + 27 u_1^2 u_2 - 24 u_1^2 - 9 u_1 u_2^2 - 48 u_1 u_2 - 15 u_1 + u_2^3 - 24 u_2^2 - 19 u_2 - 3.
\end{array}\label{eq:example}
\end{equation}
The equations~(\ref{eq:example}) were obtained from the following decoupled structure:
$$
\left[ \begin{array}{c} y_1 \\ y_2 \end{array} \right] =
\left[ \begin{array}{rr} 1 & 2 \\ -3 & -1 \end{array}\right]
\left[ \begin{array}{c} 2 x_1^2 - 3 x_1 + 1 \\ 2 x_2^3 - x_2 \end{array}\right]
,
\quad \textrm{with} \quad
\left[ \begin{array}{c} {x}_1 \\ {x}_2 \end{array} \right] =
\left[ \begin{array}{rr} -2 & -2 \\ -3 & -1 \end{array}\right]
\left[ \begin{array}{c} u_1 \\ u_2 \end{array}\right],
$$
revealing the internal univariate polynomials and the linear transformations at the input and output of the structure. 

It will turn out that a decoupled representation $\vec{f}(\vec{u}) = \mat{W} \vec{g}(\mat{V}^\top \vec{u})$ is not unique. 
To clearly make the distinction, the underlying representation (when it exists) will henceforth be denoted by barred symbols, \ie $\true{\vec{f}}(\vec{u}) = \true{\mat{W}} \true{\vec{g}}(\true{\mat{V}}^\top \vec{u})$, whereas the result of the decoupling procedure will be denoted by non-barred symbols, \ie $\vec{f}(\vec{u}) = \mat{W} \vec{g}(\mat{V}^\top \vec{u})$. 
Hence, for the polynomials~(\ref{eq:example}), we have 
$$
\begin{array}{rcl}
    \true{\mat{V}} &=& \left[ \begin{array}{rr} -2 & -3 \\ -2 & -1 \end{array} \right], \\ \\ 
    \true{\mat{W}} &=& \left[ \begin{array}{rr} 1 & 2 \\ -3 & 1 \end{array} \right], \\ \\ 
    \true{\vec{g}}(\true{\vec{x}}) &=& \left[ \begin{array}{r} \true{g}_1(\true{x}_1) \\ \true{g}_2(\true{x}_2) \end{array} \right] = \left[ \begin{array}{r} 2 \true{x}_1^2 -3\true{x}_1 + 1 \\ 2 \true{x}_2^3 - \true{x}_2 \end{array} \right].  
\end{array}
$$ 
\end{example}

\subsection{Related Work and Applications}
The problem at hand is related to the Waring problem for polynomials \cite{alexanderhirschowitz1995,iarrobino-kanev1999,landsberg2012tensorsgeometry,ranestad2000varsumpow} which concerns the decomposition of a single homogeneous multivariate polynomial $f(u_1,\ldots,u_m)$ of degree $d$ as 
$$ f(u_1,\ldots,u_m) = \sum_{i=1}^r w_i (v_1 u_1 + \cdots + v_m u_m)^d,$$
in which $r$ denotes the so-called Waring rank. 
Research on obtaining upper bounds on $r$, as well as developing algorithms for computing this decomposition dates back to Sylvester, who solved the case $m=2$ in 1886 \cite{sylvester1886}.
The Waring decomposition for $m > 2$ and several extensions of the problem have attracted research activity ever since (see \cite{alexanderhirschowitz1995,  comonmourrain1996, iarrobino-kanev1999, oeding2013eigtensors} and references therein).  
Today still, the problem receives a lot of research attention, especially due to the bijective relation between the homogeneous Waring decomposition and the symmetric tensor decomposition \cite{brachat2010std,comon2008symtensors,comonmourrain1996,kolda2009tdaa,landsberg2012tensorsgeometry,oeding2013eigtensors,schoukens2012ceipwsualit,usevich2014mtns}, of which the latter ---and tensor methods in general--- have become an important research domain in the last decades \cite{kolda2009tdaa}. 

The problem we study is very reminiscent of the classical Waring problem, however we consider the \emph{non-homogeneous case} of \emph{several polynomials}. 
The non-homogeneous Waring problem is studied in \cite{bialynicki2008,schinzel2002}.
The simultaneous Waring problem for several homogeneous polynomials is studied in \cite{carlini2003waringseveralforms,  tiels2013fctdpripwm}.  
In this paper we will restrict our attention to the case in which the Waring rank is low, and we focus on the computation of the decomposition. 

The decoupling task is of interest in non-linear block-oriented system identification \cite{giri2010bnsi} and non-linear state-space identification \cite{paduart2010} where it is often desired to recover the internal structure of an identified static non-linear mapping \cite{schoukens2012ceipwsualit,tiels2013fctdpripwm,vanmulders2013autom}. 
More generally, the task has connections with applications of tensor algebra methods in signal processing, see recent surveys \cite{cichocki2013spm,comon2014tabi} and references therein.

\subsection{Notation}\label{sec:notation} 
Scalars are denoted by lower-case or uppercase letters. 
Vectors are denoted by lower-case bold-face letters, \eg $\vec{x} \in \mathbb{R}^r$. 
Elements of a vector are denoted by lower-case letters with an index as subscript, \eg $\vec{x} = \left[ \begin{array}{ccc} x_1 & \ldots & x_r \end{array} \right]^\top$.
The Euclidean norm of a vector $\vec{x}$ is denoted as $\left\| \vec{x} \right\|$.
When a vector is passed to a function as an argument, the notation $\vec{x} \isdef (x_1,\ldots,x_r)$ is often used, \eg $y_1 = f_1(u_1,u_2)$ (see also below). 
Matrices are denoted by upper-case bold-face letters, \eg $\mat{V} \in \mathbb{R}^{m \times r}$. 
The entry in the $i$-th row and $j$-th column of the matrix $\mat{V}$ is $v_{ij}$, and we may represent a matrix $\mat{V}$ as $\textstyle \mat{V} = \left[ v_{ij} \right]$. 
A matrix $\mat{V} \in \mathbb{R}^{m \times r}$ can be represented by its columns as $\mat{V} = \left[ \begin{array}{ccc} \vec{v}_1 & \ldots & \vec{v}_r \end{array} \right]$.
The transpose and pseudo-inverse of a matrix $\mat{W}$ are denoted by $\mat{W}^\top$ and $\mat{W}^\pinv$, respectively. 
A diagonal matrix with diagonal elements $a_1$, $a_2$, $a_3$ is denoted by $\operatorname{diag}(a_1, a_2, a_3)$ or $\operatorname{diag}(a_i)$.
The rank of a matrix $\mat{A}$ is denoted as $\operatorname{rank}(\mat{A})$.
The dimension of the (right) null space of a matrix $\mat{A}$ is denoted by $\operatorname{dim} \operatorname{null} \mat{A}$. 
Higher-order tensors are $N$-way arrays and are denoted by bold-face upper-case caligraphical letters, \eg $\ten{J} \in \mathbb{R}^{n \times m \times N}$. 
The outer product is denoted by $\circ$ and defined as follows: For $\ten{X} = \vec{u} \circ \vec{v} \circ \vec{w}$, the entry in position $(i,j,k)$ is $u_i v_j w_k$. 
The Frobenius norm of a tensor $\ten{X}$ is denoted as $\left\| \ten{X} \right\|_F$.

For functions we employ the same convention as above.  
Scalar functions are denoted by lower-case symbols, \eg $f \colon \mathbb{R}^n \to \mathbb{R}$. 
Vector functions are denoted by lower-case bold symbols, \eg $\vec{f} \colon \mathbb{R}^m \to \mathbb{R}^n$, with $\vec{f}(\vec{u}) \isdef \left[ \begin{array}{ccc} f_1(u_1,\ldots,u_m) & \ldots & f_n(u_1,\ldots,u_m) \end{array} \right]^\top$.
Matrix functions are denoted by upper-case bold-faced symbols, \eg the Jacobian of $\vec{f}$ is denoted by $\mat{J} \colon \mathbb{R}^m \to \mathbb{R}^{n \times m}$ and is defined as
$
\mat{J}(\vec{u}) \isdef \left[ \partial f_i / \partial u_j (\vec{u}) \right]$.
The derivative of a univariate function $g(x)$ is often denoted using the simplified representation
$g'(x) \isdef \d g(x) / \d x$.
The ceiling function of a real number $x$ is denoted by $\left\lceil x \right\rceil$ and defined as the smallest integer not less than $x$.

\subsection{Outline of the Paper}
The remainder of this paper is organized as follows.
Section~\ref{sec:method} contains the description of the proposed approach that leads to a simultaneous matrix diagonalization problem, which is solved by a tensor decomposition. 
The method is presented and its properties are discussed.
In Section~\ref{sec:openprobs} we point out open problems for future work. 
Section~\ref{sec:conclusions} is devoted to the conclusions.

\section{Method}\label{sec:method}
\subsection{A Simultaneous Matrix Diagonalization Problem}\label{sec:matdiag}
The rationale behind the proposed method is to capture the behavior of $\vec{f}(\vec{u})$ by means of its first-order information collected in a set of operating points. 
The first-order information of a non-linear function $\vec{f}$ is contained in the Jacobian matrix of $\vec{f}(\vec{u})$, denoted by $\mat{J}(\vec{u})$ and defined as 
\begin{equation}
\mat{J}(\vec{u}) \isdef \left[ \begin{array}{ccc} \frac{\partial f_1}{\partial u_1}(\vec{u}) & \ldots & \frac{\partial f_1}{\partial u_m}(\vec{u}) \\ \vdots & \ddots & \vdots \\ \frac{\partial f_n}{\partial u_1}(\vec{u}) & \ldots & \frac{\partial f_n}{\partial u_m}(\vec{u}) \end{array} \right].
\label{eq:jacobian}
\end{equation}
By evaluating the Jacobian matrix in the operating points $\vec{u}^{(k)}$, $k=1,\ldots,N$, we will find that the decoupling task is solved by a simultaneous diagonalization of the set of Jacobian matrices $\mat{J}(\vec{u}^{(k)})$, obtained in this way.\footnote{In the classical literature, the simultaneous (or joint) diagonalization refers to the simultaneous congruence transformation $\mat{A}_k = \mat{V} \mat{D}_k \mat{V}^\top$, where the $\mat{A}_k$ are square matrices (see \cite{chabriel2014jointmatrixdiag} for a recent survey paper). In this paper, simultaneous diagonalization concerns the non-symmetrical problem $\mat{A}_k = \mat{W} \mat{D}_k \mat{V}^\top$, where, in addition to having different linear transformations on the left and on the right, the matrices $\mat{A}_k$ are not necessarily square.}
After the transformations $\mat{V}$ and $\mat{W}$ are determined, also an estimation of the internal univariate $g_i(x_i)$ can be reconstructed. 

\begin{lemma}\label{lem:jac}
The first-order derivatives of the parameterization~(\ref{eq:decoupled}) are given by
\begin{equation}
\mat{J}(\vec{u}) = \mat{W} \operatorname{diag} \left(g_i'(\vec{v}_i^\top \vec{u})\right) \mat{V}^\top,
\label{eq:jacobianfactorization}
\end{equation}
where $g_i'(x_i) \isdef \d g_i(x_i) / \d x_i$. 
\end{lemma}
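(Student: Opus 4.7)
The plan is to apply the chain rule directly to the composition $\mathbf{f}(\mathbf{u}) = \mathbf{W}\, \mathbf{g}(\mathbf{V}^\top \mathbf{u})$. I would decompose this mapping as three successive transformations: the linear map $\mathbf{u} \mapsto \mathbf{x} = \mathbf{V}^\top \mathbf{u}$, the diagonal nonlinear map $\mathbf{x} \mapsto \mathbf{z} = \mathbf{g}(\mathbf{x})$ (where $z_j = g_j(x_j)$), and the linear map $\mathbf{z} \mapsto \mathbf{y} = \mathbf{W}\mathbf{z}$. The Jacobian of the composition is the product of the Jacobians of these three pieces, in the appropriate order.

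First, I would compute the Jacobian of each piece separately. The Jacobian of $\mathbf{u} \mapsto \mathbf{V}^\top \mathbf{u}$ is simply $\mathbf{V}^\top$, since the entries are $\partial x_j / \partial u_k = v_{kj}$, where $\mathbf{v}_j$ is the $j$-th column of $\mathbf{V}$. The Jacobian of $\mathbf{x} \mapsto \mathbf{g}(\mathbf{x})$ is diagonal, because $g_j$ depends only on $x_j$: its $(j,\ell)$ entry equals $g_j'(x_j)\delta_{j\ell}$, giving the matrix $\operatorname{diag}(g_j'(x_j))$. Finally, the Jacobian of $\mathbf{z} \mapsto \mathbf{W}\mathbf{z}$ is $\mathbf{W}$.

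Next, applying the multivariable chain rule yields
\begin{equation*}
\mathbf{J}(\mathbf{u}) \;=\; \mathbf{W} \cdot \operatorname{diag}\!\bigl(g_j'(x_j)\bigr) \cdot \mathbf{V}^\top,
\end{equation*}
and substituting $x_j = \mathbf{v}_j^\top \mathbf{u}$ produces the stated factorization. As a verification, I would compute the $(i,k)$ entry of the right-hand side by expanding the matrix product: this gives $\sum_{j=1}^r w_{ij}\, g_j'(\mathbf{v}_j^\top \mathbf{u})\, v_{kj}$, which matches the direct computation of $\partial f_i / \partial u_k$ from $f_i(\mathbf{u}) = \sum_j w_{ij}\, g_j(\mathbf{v}_j^\top \mathbf{u})$ via the chain rule on a single scalar output.

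There is no real obstacle here; the lemma is essentially a restatement of the chain rule in matrix form, and the only thing to be careful about is indexing—specifically, making sure the columns $\mathbf{v}_j$ of $\mathbf{V}$ (rather than rows) appear both inside the derivative arguments and in $\mathbf{V}^\top$, and that the diagonal matrix is indexed by $j = 1,\ldots,r$ rather than by $i$ or $k$. The factorization is precisely what makes the subsequent simultaneous diagonalization viewpoint work, so it is worth stating the result cleanly even though the proof itself is mechanical.
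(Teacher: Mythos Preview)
Your proposal is correct and follows essentially the same approach as the paper: both arguments amount to writing $\vec{f}(\vec{u}) = \mat{W}\,[g_1(\vec{v}_1^\top\vec{u}),\ldots,g_r(\vec{v}_r^\top\vec{u})]^\top$ and applying the chain rule. Your version is simply more explicit about the three-factor decomposition and the entrywise check, whereas the paper compresses the whole argument into a single sentence.
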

\begin{proof}
The parameterization~(\ref{eq:decoupled}) is written more conveniently as 
$$
\vec{f}(\vec{u}) = \mat{W} \left[ \begin{array}{ccc} g_1(\vec{v}_1^\top \vec{u}) & \ldots & g_r(\vec{v}_r^\top \vec{u}) \end{array} \right]^\top,
$$
from which the lemma immediately follows by applying the chain rule. 
\end{proof}

Lemma~\ref{lem:jac} implies that the first-order derivatives of the parameterization~(\ref{eq:decoupled}), evaluated at the points $\vec{u}^{(k)}$, lead to the simultaneous diagonalization of a set of matrices $$ \mat{J}(\vec{u}^{(k)}) = \mat{W} \operatorname{diag}(g_i'(\vec{v}_i^\top \vec{u}^{(k)})) \mat{V}^\top,$$ in which the matrix factors $\mat{W}$ and $\mat{V}$ do not depend on the choice of the operating point $\vec{u}^{(k)}$.
Simultaneous matrix diagonalization can be computed by tensor methods. 

Consider the Jacobian tensor $\ten{J}$ that is constructed by stacking the Jacobian evaluations $\mat{J}(\vec{u}^{(k)})$ behind each other, giving rise to a three-way array of dimensions $n \times m \times N$.
The canonical polyadic decomposition (CP decomposition) \cite{carroll1970,harshman1970,kolda2009tdaa} expresses the tensor $\ten{J}$ as a sum of rank-1 terms.
The three-way tensor $\ten{J}$ is thus written as
\begin{equation}
    \ten{J} = \sum_{i=1}^r \vec{w}_i \circ \vec{v}_i \circ \vec{h}_i,
\label{eq:Jcpd}
\end{equation}
where $\circ$ denotes the outer product and $r$ is a positive integer. 
We have that
$$ 
\begin{array}{rcl} 
    \mat{W} &=& \left[ \begin{array}{ccc} \vec{w}_1 & \ldots & \vec{w}_r \end{array} \right],\\ \\
    \mat{V} &=& \left[ \begin{array}{ccc} \vec{v}_1 & \ldots & \vec{v}_r \end{array} \right], \quad \mbox{and}\\ \\
    \mat{H} &=& \left[ \begin{array}{ccc} \vec{h}_1 & \ldots & \vec{h}_r \end{array} \right],
\end{array}
$$
with $\mat{H}$ containing the evaluations of the $g_i'(\vec{v}_i^\top \vec{u})$ in the $N$ operating points as 
\begin{equation}
h_{ki} = g_i'(\vec{v}_i^\top \vec{u}^{(k)}).
\label{eq:hki}
\end{equation} 
Figure~\ref{fig:jaccpd} gives an overview of the simultaneous matrix diagonalization question and the CP decomposition. 
\begin{figure}[!htb]
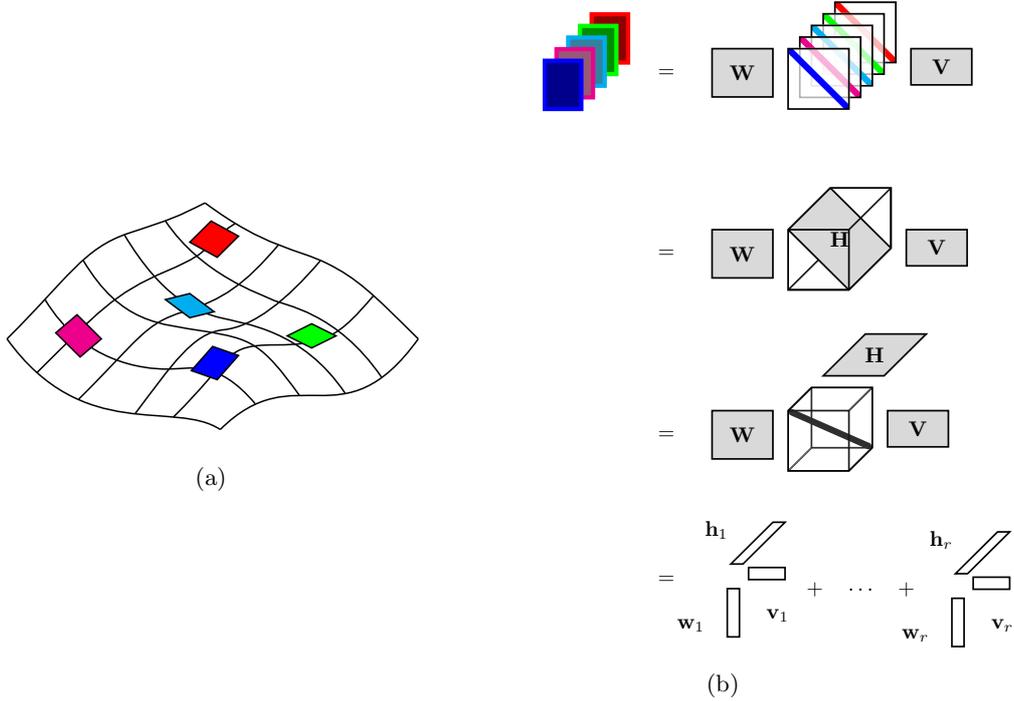

\begin{subfigure}[c]{0.45\textwidth}
\centering
\begin{tikzpicture}[shift={(0,5cm)},anchor=center] 
    \node[scale=0.38] {\input{fig-lineariz-arxiv.tex} };
\end{tikzpicture}
\caption{}
\end{subfigure}
\begin{subfigure}[c]{0.45\textwidth}
\centering
\begin{tikzpicture}[anchor=center,baseline] 
    \node[scale=0.8] {\input{fig-jaccpd-arxiv.tex}};
\end{tikzpicture}
\caption{}
\end{subfigure}
\caption{The first-order information of $\vec{f}(\vec{u})$ is collected in a set of operating points $\vec{u}^{(k)}$, with $k=1,\ldots,N$ (indicated by the colored patches on the surface shown in (a)). 
The corresponding Jacobian matrices $\mat{J}(\vec{u}^{(k)})$ are placed in a three-way tensor (b).
Lemma~\ref{lem:jac} states that each Jacobian matrix $\mat{J}(\vec{u}^{(k)})$ can be written as $\mat{J}(\vec{u}^{(k)}) = \mat{W} \operatorname{diag}(g_i'(\vec{v}_i^\top \vec{u}^{(k)})) \mat{V}^\top$. 
This results in a simultaneous matrix diagonalization problem, which is computed by the CP decomposition.}
\label{fig:jaccpd}
\end{figure}

\subsection{Uniqueness of the Canonical Polyadic Decomposition}\label{sec:identifiability}
Two aspects can easily be observed in the CP decomposition~(\ref{eq:Jcpd}) that prohibit the unique retrieval of the transformations $\mat{V}$ and $\mat{W}$ and the mappings $g_i(x_i)$. 
By rewriting (\ref{eq:Jcpd}) as $\textstyle \ten{J} = \sum_{i=1}^r (\alpha_i \vec{w}_i) \circ (\beta_i \vec{v}_i) \circ (\gamma_i \vec{h}_i)$, with $\alpha_i \beta_i \gamma_i = 1$, a column-wise scaling invariance becomes clear.
Additionally, the specific order in which the $r$ terms are collected into the factor matrices $\mat{V}$, $\mat{W}$ and $\mat{H}$ gives rise to an admissible permutation of the columns of the factors. 

The term \emph{essential uniqueness} is used to denote the uniqueness of the CP decomposition up to the column-wise scaling and permutation of the columns. 
Henceforth, we will use the term uniqueness when we refer to essential uniqueness. 
Kruskal \cite{kruskal1977,kruskal1988} has derived a condition that guarantees uniqueness of the CP decomposition. 
Essentially it provides an upper bound on the rank of a tensor in order to have a unique CP decomposition. 
We denote by $k_\mat{X}$ the \emph{Kruskal rank of a matrix $\mat{X}$}, which is defined as the largest number $k$ for which any set of $k$ columns of $\mat{X}$ is linearly independent.
\begin{theorem}[Kruskal \cite{kruskal1977,kruskal1988}]\label{thm:kruskal}
    The CP decomposition of $\ten{J}$ uniquely decomposes $\ten{J}$ into the factors $\mat{V}$, $\mat{W}$ and $\mat{H}$ (up to a permutation and scaling of the columns), provided that
\begin{equation}
k_\mat{V} + k_\mat{W} + k_\mat{H} \geq 2r + 2.
\label{eq:kruskal}
\end{equation}
\end{theorem}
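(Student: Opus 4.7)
The plan is to prove the theorem by assuming that $\ten{J}$ admits two CP decompositions of length $r$, with factor triples $(\mat{W},\mat{V},\mat{H})$ and $(\mat{W}',\mat{V}',\mat{H}')$, and to deduce that the two triples must agree up to a common column permutation and compatible column scalings. A natural way in is via the frontal slices: the $k$-th slice may be written as
$$\mat{J}_k = \mat{W}\operatorname{diag}(h_{k,1},\ldots,h_{k,r})\mat{V}^\top = \mat{W}'\operatorname{diag}(h'_{k,1},\ldots,h'_{k,r})(\mat{V}')^\top,$$
and more generally any linear combination $\sum_k \alpha_k \mat{J}_k$ factorises both ways with the diagonal entries replaced by $\mat{H}^\top \vec{\alpha}$ and $(\mat{H}')^\top \vec{\alpha}$. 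Since $\vec{\alpha}$ is free, one can control the sparsity patterns of these diagonal matrices, which is what drives the argument.

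The technical heart is Kruskal's \emph{permutation lemma}: if two matrices $\mat{A},\mat{B}$ with $r$ columns satisfy, for every vector $\vec{y}$ in the appropriate space, a bound of the form \emph{``the number of nonzero entries of $\mat{B}^\top \vec{y}$ is at most $r - k_\mat{A} + \operatorname{rank}(\mat{A}^\top \vec{y})$''}, and $\mat{A}$ has positive Kruskal rank, then $\mat{B}$ agrees with $\mat{A}$ up to a column permutation and a non-singular diagonal column scaling. I would apply this lemma to each of the three modes: the hypothesis $k_\mat{W} + k_\mat{V} + k_\mat{H} \geq 2r + 2$ yields, via linear combinations of slices and counting arguments on the ranks of the products $\mat{W}\operatorname{diag}(\cdot)\mat{V}^\top$, the sparsity bound required to invoke the lemma. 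This forces the columns of $\mat{H}'$ to coincide, up to scalings and a permutation $\mat{\Pi}_H$, with the columns of $\mat{H}$, and analogously for the pairs $(\mat{W},\mat{W}')$ and $(\mat{V},\mat{V}')$. A final bookkeeping step shows that the three permutations $\mat{\Pi}_H$, $\mat{\Pi}_V$, $\mat{\Pi}_W$ are equal and that the three diagonal scalings satisfy the trilinear consistency $\alpha_i \beta_i \gamma_i = 1$ on each term.

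The main obstacle is the permutation lemma itself, whose proof is not short: it proceeds by a delicate induction on $r$ together with a counting of zeros in $\mat{B}^\top \vec{y}$ for cleverly chosen $\vec{y}$, exploiting the defining property of the Kruskal rank. A secondary difficulty is translating the combined Kruskal-rank hypothesis (\ref{eq:kruskal}) into the precise sparsity inequality required by the lemma; this is where the factor $2r+2$ enters quantitatively. Since the result is classical, in the actual write-up I would invoke (\ref{eq:kruskal}) and cite Kruskal's original arguments rather than reproduce the combinatorial machinery in full.
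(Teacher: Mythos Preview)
The paper does not prove this theorem at all: it is stated as a classical result and attributed via citation to Kruskal's original papers \cite{kruskal1977,kruskal1988}, with no argument given. Your proposal, by contrast, sketches the actual content of Kruskal's proof --- the reduction to frontal-slice factorizations, the use of linear combinations of slices to control sparsity, and the invocation of Kruskal's permutation lemma --- and this sketch is essentially correct as an outline of how the result is established in the literature. So your write-up goes well beyond what the paper itself does; for the purposes of this paper a one-line citation to Kruskal suffices, which is effectively what you conclude in your final sentence anyway.
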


It is often more practical to think of Theorem~\ref{thm:kruskal} in terms of the number of inputs $m$ and outputs $n$ of the non-linear function that we are decoupling.  
Under the assumption that the number of operating points is larger than the number of internal functions $g_i(x_i)$, \ie $N \geq r$, $\operatorname{rank} \mat{H} = r$, and $\mat{V}$ and $\mat{W}$ have full rank, which is often the case (\ie if the operating points $\vec{u}^{(k)}$ are chosen as random numbers), condition (\ref{eq:kruskal}) boils down to 
$$
\operatorname{min}(m,r) + \operatorname{min}(n,r) \geq r+2.
$$
It should be noted that condition~(\ref{eq:kruskal}) is quite reasonable in terms of number of inputs, outputs and number of internal $g_i(x_i)$.    

\begin{remark} 
Kruskal's uniqueness condition~(\ref{eq:kruskal}) does not imply that the optimization routine that computes the CP decomposition is not harmed by the problem of local minima: The result states that if the approximation error of the CP decomposition is zero, the retrieved factors $\mat{V}$, $\mat{W}$ and $\mat{H}$ are (up to a scaling and a possible permutation of the columns) identical to the underlying factors.  
\end{remark}

\subsection{Tensor Rank}\label{sec:tensorrank}
The integer $r$ has occurred in the above as the number of internal mappings $g_i(x_i)$ in the decoupled structure and as the number of terms in the CP decomposition. 
In the latter sense, the smallest integer $r$ for which (\ref{eq:Jcpd}) holds exactly, is the definition of the rank of the tensor $\ten{J}$. 
As opposed to the matrix rank, which is smaller than the smallest dimension, it is possible that $\operatorname{rank} \ten{J} > \operatorname{max}(m,n,N)$.
This also means that the number of internal mappings $g_i(x_i)$ may exceed the number of inputs and/or outputs. 

Determining the value for $r$ is a part of the decoupling procedure.
Currently, there are no direct ways to determine the (numerical) rank of a given tensor, although there exists notions of typical and generic rank of a tensor, as well as upper bounds on the rank, that are known for specific cases \cite{kolda2009tdaa}. 
It can be shown that
$ \operatorname{rank} \ten{J} \leq \operatorname{min}(mn, mN, nN)$ \cite{kolda2009tdaa}.
Note that, in practice, the number of operating points $N$ is typically chosen (much) larger than $m$ and $n$, in which case we have  $\operatorname{rank} \ten{J} \leq mn$.
In practice, the tensor rank $r$ is determined by assessing the approximation error of the rank-$r$ approximation of a tensor for consecutive values of $r$. 
For the exact decoupling task, assuming that the true transformations $\mat{V}$ and $\mat{W}$ meet Kruskal's uniqueness conditions (\ref{eq:kruskal}), the CP decomposition will indeed reach an approximation error that is sufficiently close to the machine precision when the correct $r$ is checked.

\begin{example}\label{ex:runningex2}
We revisit equations~(\ref{eq:example}) from Example~\ref{ex:runningex1}. 
We choose $N=2$ operating points $\vec{u}^{(k)}$ and their corresponding Jacobians $\true{\mat{J}}(\vec{u}^{(k)})$, as 
$$ \begin{array}{rclrcl} 
    \vec{u}^{(1)} &=& \left[ \begin{array}{r} -1\\0\end{array} \right], & \quad \true{\mat{J}}(\vec{u}^{(1)}) &=& \left[ \begin{array}{rr} 146 & -62 \\ -48 & 56 \end{array} \right], \\ \\
    \vec{u}^{(2)} &=& \left[ \begin{array}{r} 1\\-2\end{array} \right], & \quad \true{\mat{J}}(\vec{u}^{(2)}) &=& \left[ \begin{array}{rr} 434 & -158 \\  -192 & 104 \end{array} \right], 
\end{array}
$$
giving rise to a $2 \times 2 \times 2$ tensor $\true{\ten{J}}$. 
Since we know that $\true{r} = 2$ and Kruskal's uniqueness condition~(\ref{eq:kruskal}) guarantees uniqueness if $r \leq 2$, the choice $N=2$ is justified. 
It can be verified that the tensor can be decomposed using a rank-two CP decomposition up to a relative error of $1.64 \times 10^{-16}$, which confirms $r = 2$ as expected.  

The internal $\true{\vec{x}}^{(k)}$ can be computed using the expression $\true{\vec{x}} = \true{\mat{V}}^\top \vec{u}$, leading to 
$$ \true{\vec{x}}^{(1)} = \left[ \begin{array}{r} 2 \\ -3 \end{array}\right] \quad \mbox{and} \quad \true{\vec{x}}^{(2)} = \left[ \begin{array}{r} 2 \\ 5 \end{array} \right], $$ 
from which we can also compute the entries of $\true{\mat{H}}$ using $\true{h}_{ki} = \true{g}_i'(\true{x}_i^{(k)})$: 
$$\true{\mat{H}} = \left[ \begin{array}{rr} 5 & 26 \\ 5 & 74 \end{array} \right].$$
The CP decomposition is computed using tensorlab \cite{sorber2014tensorlab2} and returns three factors $\mat{V}$, $\mat{W}$ and $\mat{H}$, equal to the true factors up to a scaling and permutation of the columns as\footnote{Due to the lack of \emph{global} uniqueness, the numerical result of the CP decomposition may differ between executions, as well as when using a different routine for computing the CP decomposition.}
$$ 
\begin{array}{rclcl}
    \mat{V} &=& \left[ \begin{array}{rr} -512.1246 & -31.6350 \\  170.7082 & -31.6350 \end{array} \right] &=& \left[ \begin{array}{rr} -2 & 3 \\ -2 & -1 \end{array} \right] \left[ \begin{array}{rr} 0 & 15.8175 \\ -170.7082 & 0 \end{array}\right], \\ \\ 
    \mat{W} &=& \left[ \begin{array}{rr} -0.9189 &  -0.4470 \\ 0.4595  &  1.3411\end{array} \right] &=& \left[ \begin{array}{rr} 1 & 2 \\ -3 & 1 \end{array} \right] \left[ \begin{array}{rr} 0 & -0.4470 \\ -0.4595 & 0 \end{array}\right], \\ \\
    \mat{H} &=& \left[ \begin{array}{rr} 0.3315 & -0.7071 \\ 0.9435 & -0.7071 \end{array} \right] &=& \left[ \begin{array}{rr} 5 & 26 \\ 5 & 74 \end{array} \right] \left[ \begin{array}{rr} 0 & -0.1414 \\ 0.0127 & 0 \end{array} \right].  \end{array}
$$
It can easily be verified that the product of the scaling factors for the three factors yields unity for both columns.  
\end{example}

\subsection{Reconstructing the Internal Functions $g_i(x_i)$}
In this section we will describe how the coefficients of $g_i(x_i)$ are obtained from the retrieved $\mat{V}$ and $\mat{W}$, using input-output pairs $(\vec{u}^{(k)}, \vec{y}^{(k)})$.\footnote{Other methods exist to retrieve the coefficients of $g_i(x_i)$, for instance by using the fact that the factor $\mat{H}$ contains information about the differentiated $g_i(x_i)$ as in (\ref{eq:hki}). Such \emph{fitting and integration} methods may be of interest in the non-exact case, where the additional information can be helpful to obtain a better approximation, but they are not discussed in the current paper.} 
We can write each output $\vec{y}^{(k)}$ as a linear function of the coefficients $c_{i,j}$, and combine them into a block-equation system from which the coefficients can be determined. 
 
\subsubsection{Block-Vandermonde-like Linear System}
Recall that we have $\vec{y} = \mat{W} \vec{g}(\vec{x})$, which we write more conveniently as 
$$ 
\begin{array}{rcl}
             \vec{y}     &=& \mat{W} \left[ \begin{array}{c} c_{1,0} + c_{1,1} x_1 + \cdots + c_{1,d} x_1^d \\ \cdots \\ c_{r,0} + c_{r,1} x_r + \cdots + c_{r,d} x_r^d \end{array} \right],
\end{array}
$$
where $c_{i,j}$ denote the coefficients of the $i$-th polynomial $g_i(x_i)$.
The coefficients $c_{i,j}$ can then be combined into a single coefficient vector, leading to
\begin{equation}
\vec{y} = \mat{W} \left[ \begin{array}{@{}cccc|c|cccc@{}} 1 & x_1 & \ldots & x_1^d & & & & & \\ & & & & \ddots \\ & & & & & 1 & x_r & \ldots & x_r^d  \end{array} \right] \left[ \begin{array}{@{}c@{}} c_{1,0} \\ c_{1,1} \\ \vdots \\ c_{1,d} \\ \hline \vdots \\ \hline c_{r,0} \\ c_{r,1} \\ \vdots \\ c_{r,d} \end{array} \right], 
\label{eq:yWvdmcoef}
\end{equation}
where the empty entries correspond to zeros. 
Since $\mat{V}$ and $\mat{W}$ have been derived from the CP decomposition, we can compute $\vec{x}^{(k)}=\mat{V}^\top \vec{u}^{(k)}$ for a given operating point $\vec{u}^{(k)}$. 
By combining several instances of (\ref{eq:yWvdmcoef}) in this way for $k=1,\ldots,K$, the coefficients can be estimated using the linear system
\begin{equation}
\left[ \begin{array}{@{}c@{}} \vec{y}^{(1)} \\ \vdots \\ \vec{y}^{(K)} \end{array} \right] 
= 
\underbrace{\left[ \begin{array}{@{}ccc@{}} \mat{W} & & \\ & \ddots & \\ & & \mat{W} \end{array} \right] 
\underbrace{\left[ \begin{array}{cccc|c|cccc} 
1 & x_1^{(1)} & \ldots & (x_1^{(1)})^d & & & & & \\ 
& & & & \ddots & & & & \\ 
& & & & & 1 & x_r^{(1)} & \ldots & (x_r^{(1)})^d \\
\hline
\multicolumn{4}{c|}{\ldots} & & & & & \\ 
& & & & \ddots & & & & \\
& & & & & \multicolumn{4}{c}{\ldots}\\
\hline
1 & x_1^{(K)} & \ldots & (x_1^{(K)})^d & & & & & \\ 
& & & & \ddots & & & & \\ 
& & & & & 1 & x_r^{(K)} & \ldots & (x_r^{(K)})^d \\
\end{array} \right] }_{\mat{X}_K}
}_{\mat{R}_K}
\left[ \begin{array}{@{}c@{}} c_{1,0} \\ c_{1,1} \\ \vdots \\ c_{1,d} \\ \hline \vdots \\ \hline c_{r,0} \\ c_{r,1} \\ \vdots \\ c_{r,d} \end{array} \right],
\label{eq:yKRKc}
\end{equation}
where the empty entries represent (block) zeros.
The block-diagonal matrix with $\mat{W}$ blocks has size $K n \times K r$, the block-Vandermonde-like matrix $\mat{X}_K$ has size $K r \times r (d+1)$ and their product $\mat{R}_K$ has size $K n \times r (d+1)$. 
We introduce the short-hand notation $\vec{y}_K = \mat{R}_K \vec{c}$ as a compact way to represent (\ref{eq:yKRKc}).

\subsubsection{Existence and Uniqueness of Solutions}\label{sec:existuniq}
Let us investigate the existence and uniqueness aspects of (\ref{eq:yKRKc}), where we assume that $K$ is sufficiently large for the time being. 
Since the outputs $\vec{y}^{(k)}$ are constructed using $\vec{y} = \mat{W} \vec{g}(\mat{V}^\top \vec{u})$, it can be understood immediately that a solution of (\ref{eq:yKRKc}) always exists (in the exact sense). 

Understanding whether (\ref{eq:yKRKc}) has a \emph{unique} solution requires investigating the rank of $\mat{R}_K$.
The system $\vec{y}_K = \mat{R}_K \vec{c}$ has a unique solution if $\mat{R}_K$ has full rank. 
Let us have a closer look to see what happens when $\mat{R}_K$ is rank-deficient. 
Recall that $\mat{X}_K$ contains in its rows Vandermonde vectors constructed from the $x$-variables evaluated at $K$ operating points, which gives rise to the fact that the non-zero elements of certain columns consist of ones only. 
By reordering the columns of $\mat{X}_K$ (see (\ref{eq:yKRKc})) such that the columns containing the ones (corresponding to the constant terms $c_{i,0}$) are placed on the left, the system $\vec{y}_K = \mat{R}_K \vec{c}$ becomes $\vec{y}_K = \overline{\mat{R}}_K \overline{\vec{c}}$, where 
$$
\overline{\mat{R}}_K = \left[ \begin{array}{c|ccc} \mat{W} & \bm{\times} & \ldots & \bm{\times} \\ 
 \vdots & \vdots &  & \vdots \\ \mat{W} & \bm{\times} & \ldots & \bm{\times} \end{array} \right]$$ 
is the column-reordered version of $\mat{R}_K$ and $\overline{\vec{c}}$ represents the corresponding reordered coefficient vector. 
A consequence is that the matrices $\mat{W}$ and $\overline{\mat{R}}_K$ (and hence $\mat{R}_K$) have the same column rank-deficiency: we immediately see that the block-column containing the matrices $\mat{W}$ has the same column rank as $\mat{W}$; the right-hand-side part of $\overline{\mat{R}}_K$ (represented using the entries $\bm{\times}$) contains the powers of the $x_i^{(k)}$ and has full column rank, given that the operating points $\vec{u}^{(k)}$ are taken sufficiently persistent. 

Rank-deficiency occurs for example when there are fewer outputs $n$ than branches $r$, so that $r - \operatorname{rank} \mat{W}$ coefficients $c_{i,0}$ can be chosen freely, while $\vec{y}_K = \mat{R}_K \vec{c}$ remains exactly solvable. 
Notice that the `free parameters' are the constant terms $c_{i,0}$ only, as they correspond to the columns that form the $\mat{W}$ block in $\overline{\mat{R}}_K$.

The above considerations give rise to a straightforward way to determine the minimal number of operating points $K$ that is required to obtain an exactly solvable system.
The system (\ref{eq:yKRKc}) should become sufficiently overdetermined, meaning that the number of rows of $\mat{R}_K$ should be at least equal to the rank of $\mat{R}_K$. 
We have thus $K n \geq \operatorname{rank} \mat{R}_K$, which directly leads to the condition 
\begin{equation}
K \geq \left\lceil \frac{r(d+1) - \operatorname{dim} \operatorname{null} \mat{W} }{n} \right\rceil.
\label{eq:Kmin}
\end{equation}

\begin{example}
    \label{ex:runningex3}
We revisit once again equations~(\ref{eq:example}) and show how the $g_i(x_i)$ are reconstructed. 
We compute the minimal value for $K \geq 4$ using the formula~(\ref{eq:Kmin}) and choose $K=4$ linearization points $\vec{u}^{(k)}$, the corresponding outputs $\vec{y}^{(k)}$ and the internal variables $\vec{x}^{(k)} =  \mat{V}^\top \vec{u}^{(k)}$ as 
$$ \begin{array}{rclrclrcl} 
    \vec{u}^{(1)} &=& \left[ \begin{array}{r} -0.20 \\0\end{array} \right], & \quad \vec{y}^{(1)} &=& \left[ \begin{array}{r} 0.8880 \\ -0.7440 \end{array} \right], & \quad \vec{x}^{(1)} &=& \left[ \begin{array}{r} 102.4249 \\  6.3270  \end{array} \right], \\ \\
    \vec{u}^{(2)} &=& \left[ \begin{array}{r} 0.25 \\ -2.00 \end{array} \right], & \quad \vec{y}^{(2)} &=& \left[ \begin{array}{r} 51.0938 \\ -63.0469 \end{array} \right], & \quad \vec{x}^{(2)} &=& \left[ \begin{array}{r}  -469.4476 \\  55.3612  \end{array} \right], \\ \\
    \vec{u}^{(3)} &=& \left[ \begin{array}{r} 0.50 \\ 0.25 \end{array} \right], & \quad \vec{y}^{(3)} &=& \left[ \begin{array}{r} 11.4063 \\ -30.7032 \end{array} \right], & \quad \vec{x}^{(3)} &=& \left[ \begin{array}{r} -213.3853\\ -23.7262  \end{array} \right], \\ \\
    \vec{u}^{(4)} &=& \left[ \begin{array}{r} 0 \\0.50 \end{array} \right], & \quad \vec{y}^{(4)} &=& \left[ \begin{array}{r} 6.7500 \\ -18.3750 \end{array} \right], & \quad \vec{x}^{(4)} &=& \left[ \begin{array}{r}  85.3541 \\ -15.8175 \end{array} \right]. \\ \\
\end{array}
$$
We construct the $8 \times 8$ matrix $\mat{R}_K$, having rank $8$. 
Solving~(\ref{eq:yKRKc}) returns the coefficients $c_{i,j}$ and we find 
$$ 
 \begin{array}{rcl} g_1(x_1) &=&  -0.0127 x_1^3 -4.3751 \times 10^{-7} x_1, \\ \\ g_2(x_2) &=& -0.0179 x_2^2 +0.4243 x_2 -2.2369.\end{array}   
$$
We verify that $\true{\vec{f}}(\vec{u})$ corresponds to $\vec{f}(\vec{u})$ up to a relative error on the coefficients (\ie $\| \vec{c} - \true{\vec{c}} \| / \| \true{\vec{c}} \|$) of $0.0925 \times 10^{-14}$ for $f_1$ and $0.1302 \times 10^{-14}$ for $f_2$.  
We notice that $g_i(x_i) \neq \true{g}_i(\true{x}_i)$; in Section~\ref{sec:relationgis} we will discuss the exact relation between the coefficients of $g_i(x_i)$ and $\true{g}_i(\true{x}_i)$. 
\end{example}

\begin{example}\label{ex:fatWex4}
We present an example for which $m=n=3$ and $r=4$, in which the matrix $\mat{W}$ is column rank-deficient. 
Consider the equations
$$ 
\begin{array}{rcl}
    \true{f}_1(u_1, u_2, u_3) &=& - 4 u_1^2 + 8 u_1 u_3 + 6 u_1 - 3 u_3^2 - 8 u_3 - 6, \\ \\    
    \true{f}_2(u_2, u_2, u_3) &=&  2 u_1^2 - 4 u_1 u_3 - 3 u_1 + u_2^3 + 6 u_2^2 u_3 + 12 u_2 u_3^2 - u_2 + 8 u_3^3 + 2 u_3^2 + u_3 + 3, \\ \\ 
      \true{f}_3(u_1,u_2,u_3) &=& - 2 u_1^2 + 4 u_1 u_3 + 4 u_1 - 2 u_3^2 - 3 u_3 - u_2 - 8,
\end{array}
$$ 
which were obtained as $\true{f}(\vec{u}) = \true{\mat{W}} \true{\vec{g}}(\true{\mat{V}}^\top \vec{u})$ with 
$$ 
\begin{array}{rcl}
    \true{\mat{V}} &=& \left[ \begin{array}{rrrr}   1  &   0  &  0   & 1 \\ 0  &  1  &  0  & -1 \\  -1  &  2  &  1  &  0 \end{array} \right], \\ \\
    \true{\mat{W}} &=& \left[ \begin{array}{rrrr}  -2  &  0  &  1  &  0 \\  1  &  1  &  0  &  0 \\   -1  &  0  &  0  &  1 \end{array} \right], \quad \mbox{and} \\ \\
    \true{\vec{g}}(\true{\vec{x}}) &=& \left[ \begin{array}{r}  2 x_1^2 - 3 x_1 + 3 \\ x_2^3 - x_2 \\ x_3^2 - 2 x_3 \\  x_4 - 5 \end{array} \right].
\end{array}
$$
We evaluate the Jacobian of $\true{\vec{f}}(\vec{u})$ in the $N=4$ points ($N$ is chosen such that $N \geq r$) 
$$ 
\vec{u}^{(1)} = \left[ \begin{array}{r} -0.2500 \\ 0 \\ 0.3333 \end{array} \right], \quad 
\vec{u}^{(2)} = \left[ \begin{array}{r} 0 \\ -1 \\ 0 \end{array} \right], \quad 
\vec{u}^{(3)} = \left[ \begin{array}{r} 1 \\ 0.5000 \\ 0.3333 \end{array} \right], \quad 
\vec{u}^{(4)} = \left[ \begin{array}{r} 0.3333 \\ 0 \\ -0.6667  \end{array} \right],
$$
which leads to a $3 \times 3 \times 4$ tensor $\ten{J}$.
The CP decomposition is computed with tensorlab \cite{sorber2014tensorlab2} and returns a rank-four representation with a relative error $\left\| \ten{J} - \hat{\ten{J}} \right\|_F / \left\| \ten{J} \right\|_F$ of $6.40 \times 10^{-14}$ and returns the factors 
$$
\begin{array}{rcl} 
    \mat{V} &=& \left[ \begin{array}{rrrr}   
0.0000 &  -0.3464  &  0.0000  &  1.6749\\
    1.0821 &   0.3464  &  0.0000 &  0.0000\\
    2.1641 &   0.0000 &  -1.6455 &  -1.6749 \end{array} \right], \\ \\
    \mat{W} &=& \left[ \begin{array}{rrrr}  
   0.0000  &  0.0000 &  -1.5072 &  -1.9387 \\
    2.2561  &  0.0000 &   0.0000 &   0.9693 \\
   0.0000  &  0.4803 &   0.0000 &  -0.9693 \end{array}\right], \\ \\
    \mat{H} &=& \left[ \begin{array}{rrrr}   
    0.1365 &  -6.0104 &  -0.5376 &  -3.2850\\
    0.8193 &  -6.0104 &  -0.8064 &  -1.8478\\
    1.2630 &  -6.0104 &  -0.5376 &  -0.2053\\
    1.7751 &  -6.0104 &  -1.3440 &   0.6159 \end{array}\right],
\end{array}
$$
which can be related to the underlying factors $\true{\mat{V}}$, $\true{\mat{W}}$ and $\true{\mat{H}}$. 

Formula~(\ref{eq:Kmin}) tells us that we need $K\geq 5$ points to reconstruct the internal mappings $g_i(x_i)$ so we add 
$$ \vec{u}^{(5)} = \left[ \begin{array}{r} 0.3750 \\ -0.6667 \\ 1.0000 \end{array} \right]$$
to have available $K=5$ points $\vec{u}^{(k)}$ and the corresponding $\vec{y}^{(k)}$.  
We construct the matrix $\mat{R}_K$ of size $15 \times 16$ and verify that its rank equals $15$. 
From the solution of the system~(\ref{eq:yKRKc}) we retrieve the internal functions as 
$$
\begin{array}{rcl}
g_1(x_1) &=& 0.3499 x_1^3 -0.4096 x_1 - 2.2163,
\\ \\ 
g_2(x_2) &=& -6.0104 x_2, 
\\ \\
g_3(x_3) &=& -0.2450 x_3^2 -0.8064 x_3 -6.6347,
\\ \\
g_4(x_4) &=& 0.7355 x_4^2 -1.8478 x_4 + 8.2530.
\end{array}
$$
Ultimately the complete input-output mapping $\vec{f}(\vec{u}) = \mat{W} \vec{g}(\mat{V}^\top \vec{u})$ is reconstructed with a relative error on the coefficients (\ie $\| \vec{c} - \true{\vec{c}} \| / \| \true{\vec{c}} \|$) of $1.3807 \times 10^{-12}$ for $f_1$, $1.7105 \times 10^{-12}$ for $f_2$ and $1.8102 \times 10^{-11}$ for $f_3$.
\end{example}

\subsubsection{Relation $g_i(x_i)$ to $\true{g}_i(\true{x}_i)$} \label{sec:relationgis}
Since the factors $\mat{V}$, $\mat{W}$ and $\mat{H}$ are only identifiable up to scaling and permutation of the columns, the reconstruction of the $g_i(x_i)$ will differ from one representation to the other. 
As it turns out, non-linear relations between the coefficients of $g_i(x_i)$ in the the different (equivalent) representations will show up.

Let us denote by $\mat{V} = \true{\mat{V}} \mat{D}_{\vec{\beta}}$ and $\mat{W} = \true{\mat{W}} \mat{D}_\vec{\alpha}$ the relationship between the representations of the factors $\true{\mat{V}}$ and $\mat{V}$, and $\true{\mat{W}}$ and $\mat{W}$, respectively, which is caused by the column-wise scaling and permutation invariance of the CP decomposition.  
Without loss of generality, we will discard the case of a column permutation in the exposition, implying that $\mat{D}_\vec{\alpha}$ and $\mat{D}_\vec{\beta}$ are diagonal $r \times r$ matrices containing the column-wise scaling factors $\alpha_i$ and $\beta_i$ for $\mat{V}$ and $\mat{W}$, respectively. 
This implies that the $i$-th scaling factors $\alpha_i$ and $\beta_i$ are associated with the $i$-th columns of $\mat{V}$ and $\true{\mat{V}}$, $\mat{W}$ and $\true{\mat{W}}$, and the $i$-th univariate functions $g_i(x_i)$ and $\true{g}_i(\true{x}_i)$. 

We have now that $\true{\vec{f}}(\vec{u}) = \vec{f}(\vec{u})$ and $\mat{V} = \true{\mat{V}} \mat{D}_\vec{\alpha}$ and $\mat{W} = \true{\mat{W}} \mat{D}_\vec{\beta}$, leading to 
$$
\begin{array}{rcl}
        \true{\mat{W}} \true{\vec{g}}(\true{\mat{V}}^\top \vec{u}) &=& \mat{W} \vec{g}(\mat{V}^\top \vec{u}) \\ \\ 
   \true{\mat{W}} \left[ \begin{array}{c} \true{g}_1(\true{x}_1) \\ \vdots \\ \true{g}_r(\true{x}_r) \end{array} \right]                   &=& \true{\mat{W}} \left[ \begin{array}{c} \beta_1 g_1(\alpha_1 \true{x}_1)\\ \vdots \\ \beta_r g_r(\alpha_r \true{x}_r) \end{array} \right]
\end{array}
$$
From the expressions $g_i(x_i) = c_{i,0} + c_{i,1} x_i + \cdots + c_{i,d} x_i^d$ and  $\true{g}_i(\true{x}_i) = \true{c}_{i,0} + \true{c}_{i,1} \true{x}_i + \cdots + \true{c}_{i,d} \true{x}_i^d$ we then find the relation between the coefficients of $g_i(x_i)$ and $\true{g}_i(\true{x}_i)$ as 
\begin{equation}
    \true{c}_{i,\delta} = \beta_i \; \alpha_i^\delta \; c_{i,\delta}.
    \label{eq:coeffrelation}
\end{equation}
\begin{remark}
When $\mat{W}$ is column rank-deficient, the constant terms of the $g_i(x_i)$ cannot be reconstructed uniquely, and the relation (\ref{eq:coeffrelation}) will only hold for $\delta \geq 1$ (see Section~\ref{sec:existuniq}). 
\end{remark}

\begin{example}
For the reconstruction obtained in Example~\ref{ex:runningex3} we can verify that the coefficients of the $g_i(x_i)$ indeed relate to the coefficients of the $\true{g}_i(\true{x}_i)$ through (\ref{eq:coeffrelation}). 
Note that a permutation took place between the columns of the factors, which requires an additional permutation of the scaling factors. 
We have $ \true{g}_1(\true{x}_1) = 2 \true{x}_1^2 - 3 \true{x}_1 + 1$ and $g_2(x_2) = -0.0179 x_2^2 +4.2426 x_2 -2.2369$. 
We verify that 
$$ 
\begin{array}{rclcrcl}
 1 &=& -0.4470 \times -2.2369 & \quad \Longrightarrow \quad &  \true{c}_{2,0} &=& \beta_2 c_{1,0},  \\ \\
-3 &=& -0.4470 \times 15.8175 \times -0.4243 & \quad \Longrightarrow \quad & \true{c}_{2,1} &=& \beta_2 \alpha_2 c_{1,1}, \quad \mbox{and} \\ \\
 2 &=& -0.4470 \times 15.8175^2 \times -0.0179  & \quad \Longrightarrow \quad & \true{c}_{2,2} &=& \beta_2 \alpha_2^2 c_{1,2}. \\ \\ 
\end{array}
$$
A similar analysis can be performed for the relationship between $\true{g}_2(\true{x}_2)$ and $g_1(x_1)$. 
\end{example}

\subsection{Algorithm Summary}
The complete algorithm can be summarized as follows:
\begin{enumerate}
    \item Evaluate the Jacobian matrix $\mat{J}(\vec{u})$ (see (\ref{eq:jacobian})) in $N$ operating points $\vec{u}^{(k)}$ (Section~\ref{sec:matdiag}).
    \item Stack the Jacobian matrices into a three-way tensor $\ten{J}$ of dimensions $n \times m \times N$ (Section~\ref{sec:matdiag}).
    \item Find an appropriate value for $r$ by computing the CP decomposition of $\ten{J}$ (Section~\ref{sec:tensorrank}).  
    \item Retrieve $\mat{V}$, $\mat{W}$ and $\mat{H}$ from the CP decomposition $\ten{J} = \sum_{i=1}^r \vec{w}_i \circ \vec{v}_i \circ \vec{h}_i$ (see (\ref{eq:Jcpd})). 
    \item Reconstruct the internal univariate $g_i(x_i)$ by solving (\ref{eq:yKRKc}).
    \item Check the approximation error of the decoupling procedure, \eg by checking the coefficient-wise errors on the reconstructed $\vec{f}(\vec{u})$.
\end{enumerate}

\section{Open Questions}\label{sec:openprobs}
Several aspects remain to be investigated, such as generalizing the decoupling method to the non-exact case. 
It should be studied how the approximation error can be quantified in a noisy setting: how does noise enter the problem and how are the estimated polynomial coefficients affected by noise. 
This poses the question how this knowledge can be employed in an \emph{(element-wise) weighted CP decomposition} and to what extent the decoupling can be improved.  

Another interesting question is whether \emph{Kruskal's condition} can be loosened by using the knowledge that $\mat{H}$ contains evaluations in $g_i'(x_i)$ (as in (\ref{eq:hki})). 
A tailored CP decomposition, in which such additional information is employed, may guarantee uniqueness up to a greater number of internal functions $g_i(x_i)$.

Although this paper focuses on the polynomial case, the presented method does not require that the non-linear function $\vec{f}(\vec{u})$ is polynomial, neither that the reconstructed univariate $g_i(x_i)$ are polynomial. 
The method can be easily generated to the \emph{non-polynomial case}, as explored in \cite{dreesen2014i2mtc}.

Although it is known that a CP decomposition always exists (given a sufficiently large $r$), a \emph{partial decoupling} that allows cross-linking among a smaller number of variables may be more appropriate. 
One can imagine the case where only some groups of variables can be decoupled from one another, but where an inherent structural (\eg physical) coupling among the variables in a group exists. 
The partial decoupling question suggests the use of the block-term decomposition \cite{delathauwer2008blocktermI, delathauwer2008blocktermII} instead of the CP decomposition.

\section{Conclusions}\label{sec:conclusions}
A method is developed that decomposes a set of multivariate polynomials into linear combinations of univariate polynomials in linear forms of the input variables. 
The paper covers the exact case where a decoupled representation exists and derives a method how to retrieve it.  
The method proceeds by collecting the first-order information of the given functions in a set of Jacobian matrices. 
A simultaneous diagonalization of the Jacobian matrices reveals the linear transformations in the decoupled representation. 
The coefficients of the univariate internal mappings are obtained from the solution of a block-Vandermonde-like linear system of equations that is constructed using the transformation matrices and a set of input-output samples. 

An important advantage of the method is that the curse-of-dimensionality is avoided in the sense that only a third-order tensor is constructed, regardless of the degree of the input polynomials. 
The simultaneous diagonalization of the set of Jacobian matrices is computed by means of the tensor canonical polyadic decomposition, which is known to be unique (up to certain scaling and permutation invariances) under mild conditions.  
It was shown how different (equivalent) decoupled representations are related to one another. 
The different parts of the method were illustrated by means of numerical examples.

\section*{Acknowledgments}
This work was supported in part by the Fund for Scientific Research (FWO-Vlaanderen), by the Flemish Government (Methusalem), the Belgian Government through the Inter university Poles of Attraction (IAP VII) Program, and by the ERC advanced grant SNLSID, under contract 320378. Mariya Ishteva is an FWO Pegasus Marie Curie Fellow.

\bibliographystyle{plain}
\bibliography{refs-arxiv}

\end{document}